\newcommand{\argmin}{\mathop{\rm arg\, min}}
\newcommand{\bdry}{\mathop{\rm bdry}\nolimits}
\newcommand{\be}{\begin{equation}}
\newcommand{\dom}{\mathop{\rm dom}\nolimits}
\newcommand{\ds}{\displaystyle} 
\newcommand{\dist}{\mathop{\rm dist}}
\newcommand{\ee}{\end{equation}}
\renewcommand{\iint}{\mathop{\rm int}\nolimits}
\newcommand{\reals}{{\mathbb R}}
\newcommand{\rrarrows}{\rightrightarrows}
\newcommand{\sign}{\mathop{\rm sign}}
\newcommand{\sol}{\phi}
\newcommand{\soldot}{\dot{\sol}}
\newcommand{\sola}{\psi}
\newcommand{\xdot}{\dot{x}}
\newtheorem{ntheorem}{\bf Theorem}[section]
\newtheorem{nlemma}[ntheorem]{\bf Lemma}
\newtheorem{ndefinition}[ntheorem]{\bf Definition}
\newtheorem{ncorollary}[ntheorem]{\bf Corollary}
\newtheorem{nproposition}[ntheorem]{\bf Proposition}
\newtheorem{nassumption}[ntheorem]{\bf Assumption}
\newtheorem{nexample}[ntheorem]{\bf Example}
\newtheorem{nremark}[ntheorem]{\bf Remark}
\newenvironment{theorem}{\begin{ntheorem}\it}{\end{ntheorem}}
\newenvironment{corollary}{\begin{ncorollary}\it}{\end{ncorollary}}
\newenvironment{proposition}{\begin{nproposition}\it}{\end{nproposition}}
\newenvironment{assumption}{\begin{nassumption}\it}{\end{nassumption}}
\newenvironment{example}
{\begin{nexample}\rm}{\hspace*{\fill}$\bigtriangleup$\end{nexample}}
\newcommand{\eop}
           {\hspace*{\fill}{$\vcenter{\hrule height1pt 
                     \hbox{\vrule width1pt height5pt 
            \kern5pt \vrule width1pt} \hrule height1pt}$} }
\newenvironment{proof}
{\par\noindent\textbf{Proof.}}{\eop\smallskip\vskip 3 pt}
\begin{document}


\title{A unifying convex analysis and switching system approach to consensus with undirected communication graphs}       
\author{Rafal Goebel\thanks{Department of Mathematics and Statistics, Loyola University Chicago, 
1032 W. Sheridan Road, Chicago, IL 60660. Email: {\it rgoebel1@luc.edu}. 
This work was partially supported by the Simons Foundation Grant 315326.} 
and Ricardo Sanfelice\thanks{Department of Computer Engineering,
University of California, Santa Cruz, CA, 95064, USA. E-mail:
{\it ricardo@ucsc.edu}. This work was partially
supported by the National Science Foundation CAREER Grant 
ECS-1150306 and by the Air Force Office of Scientific Research YIP
Grant FA9550-12-1-0366.}
}

\maketitle

\begin{abstract}
Switching between finitely many continuous-time autonomous steepest descent dynamics for convex functions is considered. 
Convergence of complete solutions to common minimizers of the convex functions, if such minimizers exist, is shown. 
The convex functions need not be smooth and may be subject to constraints. 
Since the common minimizers may represent consensus in a multi-agent system modeled by an undirected communication graph, 
several known results about asymptotic consensus are deduced as special cases. 
Extensions to time-varying convex functions and to dynamics given by set-valued mappings more general
than subdifferentials of convex functions are included. 
\end{abstract}

\date{\today}


\section{Introduction} 

This technical note presents a convex analysis and switching systems-based approach to proving convergence
to consensus in multi-agent systems or networks modeled by undirected graphs. 
The approach unifies and generalizes results in, for example, 
\cite{OhParkAhn15Automatica},
\cite{OlfatiSaberFaxMurray07IEEE}, 
\cite{ShiJohanssonHong13TAC}, 
\cite{QiuLiuXie16Automatica}, 
\cite{ShiProutiereJohansson15SICON},
\cite{YangMengShiHongJohansson16TAC}, 
and more, while making weaker assumptions. 
Even for the linear case, the approach uses no linear or spectral analysis. 
Certain nonlinearities, given by gradients or subdifferentials or convex functions,
convex constraints, and gradient projections fit in the approach. 

The convergence result is given for a system that switches between finitely many 
maximal monotone mappings, which are subdifferentials of convex functions restricted
to convex sets. For dynamics given by a single, but multivalued, maximal monotone mapping, 
for example for the steepest descent for a convex function, 
existence and uniqueness of solutions, their nonexpansive property, and convergence to equilibria, or minimizers in the
steepest descent case, is well studied; 
see \cite{Brezis} for a classical reference and \cite{PeypouquetSorin20JCA} for a recent survey. 
The case of a single time-varying maximal monotone mapping has seen less treatment; 
see \cite{AttouchCabotCzarnecki18TAMS}. 
For finitely many convex functions, for continuous-time dynamics driven by the minimum norm velocity 
that can be generated via arbitrary switching between steepest descents, 
convergence to Pareto-optimal points is expected;
see \cite{AttouchGarrigosGoudou15JMAA} and the references therein.  
(Common minimizers, if they exist, are Pareto-optimal.)
Stability of such points holds \cite{Miglierina04SVA}, for differential inclusions driven by
the so-called pseudogradient directions, that include the mentioned minimum norm velocity
and are related to descent directions for discrete-time multiobjective optimization \cite{FliegeSvaiter00MMOR}. 
These cases, in general, do not apply to switching dynamics. 
Results on convergence to common minimizers
for switching between finitely many convex functions does not appear to have been written down,
though similar ideas are, of course, present in alternating projections or alternating proximal point
optimization algorithms in discrete-time.

Some convex-analytic methods have been used, to an extent, in
the continuous-time consensus setting
\cite{ShiJohanssonHong13TAC}, 
\cite{ShiProutiereJohansson15SICON},
\cite{YangMengShiHongJohansson16TAC},
\cite{ZengYiHong17TAC},
as well as for the discrete-time case in \cite{NedicOzdaglarParrilo10TAC}, \cite{ZhuMartinez12TAC}, etc. 
The setting of this note encompasses, in the undirected graph case, the mentioned continuous-time works
while making weaker assumptions. The connection between consensus, gradient flow for a convex
function, and convergence of solutions to a minimizer of the function has been made before, 
in the linear case \cite{OlfatiSaberFaxMurray07IEEE} and beyond \cite{ShiProutiereJohansson15SICON}.  
This note carries the idea further. Preliminary work is in \cite{GoebelSanfelice18CDC}.
Finally, monotonicity as mentioned above is different from what is considered
in monotone systems; see \cite{ManfrediAngeli17Automatica} for consensus results in monotone systems.

\section{Main result} 

Consider the switching system
\be 
\label{ss}
\xdot\in -M_q(x)
\ee 
where the data is subject to the following assumption,
the background for which is presented in Section \ref{convex background section}. 

\begin{assumption}
\label{data assumption}
$Q=\{1,2,\dots,p\}$, and for every $q\in Q$, $M_q:\reals^n\rrarrows\reals^n$ 
is a set-valued mapping given by 
$$
M_q(x)=\partial f_q(x)+N_{C_q}(x),
$$
where $f_q:\reals^n\to\reals$ is a convex function;
$\partial f_q(x)$ is its subdifferential at $x$, in the sense of convex analysis;
$C_q\subset\reals^n$ is a nonempty closed convex set; 
and $N_{C_q}(x)$ is the normal cone to $C_q$ at $x$. 
\end{assumption}

A {\em switching signal} is a function $\sigma:[0,\infty)\to Q$ such that there exists a sequence $0=t_0<t_1<t_2<\dots$ such
that $\sigma$ is constant on $[t_j,t_{j+1})$ for $j=0,1,2,\dots$. 
Given such a signal $\sigma$, a {\em solution} to \eqref{ss} is a locally absolutely continuous function
$\sol:\dom\sol\to\reals^n$, where $\dom\sol$ is $[0,T)$, $[0,T]$, or $[0,\infty)$, such that
$$
\soldot(t)\in M_{\sigma(t)}(\sol(t)) \quad \mbox{for almost every}\ t\in\dom\sol.
$$
A solution $\phi$ is {\em maximal} if it cannot be
extended and {\em complete} if $\dom\sol=[0,\infty)$. Since $N_{C_q}(x)=\emptyset$ if 
$x\not\in C_q$, \eqref{ss} requires $\phi(t)\in C_{\sigma(t)}$
for every $t \in \dom \phi$, maximal solutions $\phi$ may fail to be complete. 

Of interest is the behavior of complete solutions to \eqref{ss} when the functions $f_q$ have common minimizers over $C_q$. 

\begin{assumption}\
\label{A assumption} 
The set $A$ is nonempty, where
$$A:=\bigcap_{q\in Q} A_q \qquad \mbox{and} \qquad A_q:=\argmin_{x\in C_q} f_q(x).$$
\end{assumption} 

For every common minimizer $a\in A$, the function 
$V(x):=\frac{1}{2}\|x-a\|^2$  
satisfies 
\be 
\label{V dec}
\frac{d}{dt}V(\sol(t))\leq \min_{x\in C_{\sigma(t)}} f_{\sigma(t)}(x)-f_{\sigma(t)}(\sol(t)) \leq 0
\ee 
along every solution $\phi$ to \eqref{ss},
which follows from the definition of the convex subdifferential \eqref{convex subdifferential}.
In particular, every $a\in A$ is Lyapunov stable for \eqref{ss}.
The inequality \eqref{V dec} can be viewed as
$\frac{d}{dt}V(\sol(t))\leq-W_{\sigma(t)}(\sol(t))$, where $W_q(x)=f_q(x)-\argmin_{x\in C_q}f_q(x)$,
and it is natural to expect some, or all, $W_q$ to asymptotically approach $0$ along complete solutions.

\begin{assumption}
\label{sigma assumption}
The switching signal $\sigma$ is such that, for each $q\in Q$,
$$
\mu(T_q(\sigma))=\infty \quad \mbox{where}\  T_q(\sigma):=\{t\in[0,\infty)\, |\, \sigma(t)=q\}.
$$
\end{assumption} 

Above, $\mu$ is the Lebesgue measure but reduces to the sum of lengths of intervals, since $T_q(\sigma)$ is a union of intervals. 
The assumptions holds, for example, if the switching signal has a positive dwell time $\tau_D>0$, i.e., if discontinuities
of $\sigma$ are separated by at least $\tau_D$, and there exists $T>0$ such that, for every $t\geq 0$, the range of $\sigma$
over $[t,t+T]$ is $Q$. But the assumption is more general. 

Under the stated assumptions, the main result is that every complete solution to \eqref{ss} converges to a common minimizer in $A$.
 
\begin{theorem} 
\label{main theorem}
Under assumptions \ref{data assumption}, \ref{A assumption}, and \ref{sigma assumption}
every complete solution to \eqref{ss} is such that $\lim_{t\to\infty} x(t)$ exists and belongs to $A$. 
\end{theorem}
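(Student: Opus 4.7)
The plan is to establish convergence via a continuous-time Opial/Fejér argument, using \eqref{V dec} as the main engine.

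First, for any $a\in A$, setting $V(x):=\tfrac12\|x-a\|^2$ and $W_q(x):=f_q(x)-\min_{C_q}f_q\geq 0$, and integrating \eqref{V dec} on $[0,T]$, I obtain
\[
V(\sol(T))+\int_0^T W_{\sigma(t)}(\sol(t))\,dt\ \leq\ V(\sol(0)).
\]
Three consequences follow at once: $\sol$ is bounded on $[0,\infty)$; $v_a:=\lim_{t\to\infty}V(\sol(t))$ exists for every $a\in A$; and $\int_0^\infty W_{\sigma(t)}(\sol(t))\,dt<\infty$, so $\int_{T_q(\sigma)}W_q(\sol(t))\,dt<\infty$ for every $q\in Q$. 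Combining this with $\mu(T_q(\sigma))=\infty$ via Chebyshev on $T_q(\sigma)$, the set $\{t\in T_q(\sigma):W_q(\sol(t))\leq\epsilon\}$ has infinite measure for every $\epsilon>0$; diagonally selecting $s_n^q\to\infty$ in $T_q(\sigma)$ with $W_q(\sol(s_n^q))\to 0$ and extracting a convergent subsequence using boundedness, continuity of $f_q$, and closedness of $C_q$ yields $\sol(s_n^q)\to x_q^*\in A_q$. Hence the $\omega$-limit set $\Omega$ of $\sol$ is a nonempty compact set that meets every $A_q$ and lies in $\{x:\|x-a\|^2=2v_a\ \forall a\in A\}$.

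The main obstacle is to promote ``$\Omega$ meets each $A_q$'' to ``$\Omega\cap A\neq\emptyset$'': in general $\Omega\cap A_1\ne\emptyset$ and $\Omega\cap A_2\ne\emptyset$ need not imply $\Omega\cap(A_1\cap A_2)\neq\emptyset$, so a generic intersection argument fails. My plan is, for a fixed $q$, to set $a^*:=P_A(x_q^*)\in A\subset A_q$ and combine three ingredients: (i) the projection inequality on the closed convex set $A$, giving $2v_a=\|x_q^*-a\|^2\geq\|x_q^*-a^*\|^2+\|a^*-a\|^2=2v_{a^*}+\|a^*-a\|^2$ for every $a\in A$; (ii) the fact that $x_q^*,a^*\in A_q$, so the segment between them lies in $A_q$ and $f_q$ is constant along it; and (iii) the inclusion $0\in\partial f_q(x_q^*)+N_{C_q}(x_q^*)$, which yields some $\xi\in\partial f_q(x_q^*)$ with $-\xi\in N_{C_q}(x_q^*)$ and, by the subgradient inequality together with $a^*\in C_q$, the orthogonality $\langle\xi,a^*-x_q^*\rangle=0$. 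Leveraging these three ingredients against each other, across the finitely many $q\in Q$ and the corresponding $x_q^*$'s all sitting in $\Omega$ on the same intersection of spheres around $A$, is where I expect to extract $v_{a^*}=0$, that is, $a^*\in\Omega\cap A$. This is the step I expect to be delicate, as it must exploit the specific convex-analytic structure of $A_q$ sitting above $A$ rather than generic Fejér/intersection reasoning.

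Once $a^*\in\Omega\cap A$ is secured, the Opial/Fejér closure is immediate: since $a^*\in A$, the map $t\mapsto\tfrac12\|\sol(t)-a^*\|^2$ is nonincreasing by \eqref{V dec} and thus has a limit, which along the sequence witnessing $a^*\in\Omega$ equals $0$; therefore $\sol(t)\to a^*\in A$, as claimed.
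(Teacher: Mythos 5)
Your first and last paragraphs are sound and match the paper's strategy: the Fej\'er/Lyapunov setup ($V$ bounded and nonincreasing, $\int_{T_q(\sigma)}W_q(\sol(t))\,dt<\infty$, cluster points $x_q^*\in A_q$ via Chebyshev and lower semicontinuity), and the Opial-type closure once a single point of $A$ is found in the $\omega$-limit set. But the step you yourself flag as delicate is exactly where the proof lives, and the plan you sketch for it does not close the gap. The three static ingredients (projection onto $A$, flatness of $f_q$ on the segment $[x_q^*,a^*]\subset A_q$, and the optimality condition $0\in\partial f_q(x_q^*)+N_{C_q}(x_q^*)$) are all consistent with a configuration in which each $x_q^*\in A_q\setminus A$ sits at a fixed positive distance from the common projection point $a^*$; nothing in that geometry forces $v_{a^*}=0$. (Your ingredient (i) does show, by symmetrizing over $q$ and $q'$, that all the projections $P_A(x_q^*)$ coincide at the unique minimizer of $a\mapsto v_a$ over $A$ --- a nice asymptotic-center observation --- but it says nothing about whether that minimum is zero.) The obstruction is genuinely dynamic, as the paper's counterexample after Theorem \ref{uber theorem} (harmonic oscillator with $W_q$ the distances to the quadrants) makes precise: all of the Lyapunov and $\omega$-limit structure you use is present there, yet convergence to $A$ fails.

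The missing idea is the per-mode Fej\'er property: on any interval where $\sigma(t)=q$, the map $t\mapsto\|\sol(t)-a_q\|$ is nonincreasing for \emph{every} $a_q\in A_q$, not merely for points of $A$, because the inequality \eqref{V dec} holds verbatim with $a$ replaced by any minimizer of $f_q$ over $C_q$. This is hypothesis (c) of the paper's Theorem \ref{uber theorem}, and it is what converts ``$\Omega$ meets each $A_q$'' into ``$\Omega$ meets $A$'': if the cluster point $a_1$ obtained from mode $1$ failed to belong to some $A_q$, the solution would have to cross the annulus $\{x:\ d/2\le\|x-a_1\|\le d\}$ infinitely often (it returns to $a_1$ along $t_{1,j}$ and escapes to $a_2$ along $t_{2,j}$); the per-mode monotonicity shows each crossing can only be accomplished while the active mode is one for which $a_1$ is not a minimizer, uniform continuity of the bounded solution gives each such crossing a time set of measure at least $\eta>0$, and on that set $W_{\sigma(t)}(\sol(t))\ge\delta>0$, so $V$ decreases by at least $\delta\eta$ per crossing --- contradicting $V\ge0$. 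You should replace your static projection/subgradient plan with this dynamic crossing argument (and note that uniform continuity of the solution, which it requires, follows from boundedness together with local boundedness of the minimal-norm selection of $\partial f_q+N_{C_q}$, as in Proposition \ref{existence etc}).
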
 

In the setting of Theorem \ref{main theorem}, since every $a\in A$ is Lyapunov stable,
the set $A$ has the property called pointwise asymptotic stability, also known as semistability. See 
\cite{GoebelSanfelice18SICON} and the references therein.

Of particular interest, motivated by consensus questions, is when and how the set of common minimizers $A$ 
is related to the consensus subspace. Let $n=km$, where $k$ represents the number of $m$-dimensional
agents. For convenience, $x\in\reals^n$ is $(x_1,x_2,\dots,x_k)$, with $x_i\in\reals^m$, and the
{\em consensus subspace} is 
$$
CS:=\{x\in\reals^n\, |\, x_1=x_2=\dots=x_k\}
$$
If complete solutions to \eqref{ss} are such that their limits exist and are in $CS$, it is said that {\em the agents
reach consensus}. 

\begin{example} 
\label{linear example} 
Let $f:\reals^n\to\reals$ be 
$$
f(x)=\frac{1}{4} \sum_{i,j=1}^k a_{ij}\|x_i-x_j\|_p^p,
$$
where, for $i,j=1,\dots,k$, $a_{ij}=a_{ji}\geq 0$, $p\in[1,\infty)$, and $\|\cdot\|_p$ is the usual $p$-norm. 
Let $C=\reals^n$. 
Then $f$ is a convex function. For $p>1$, 
$\partial f$ reduces to $\nabla f$, $N_C(x)=\{0\}$ for all $x\in\reals^n$, $M$ reduces to $\nabla f$, and
\eqref{ss} becomes $\xdot=-\nabla f(x)$. 
For $p=2$, more explicitly, this differential equation is, for $i=1,\dots,k$,
\be 
\label{linear consensus}
\xdot_i=\frac{1}{2}\sum_{j=1}^k a_{ij}\left( x_j-x_i \right).
\ee 
For $p=1$, \eqref{ss} turns to 
$$\xdot_i=\sum_{j=1}^k a_{ij}\sign\left( x_j-x_i \right), \quad i=1,\dots,k,$$
where $\sign$ is taken coordinate-wise and, by convention, $\sign(0)=0$. 
(The convex subdifferential of the absolute value is set valued at $0$, and equals $[-1,1]$, 
but this does not affect the solutions.) Second-order controllers with the sign function are, for example, 
in \cite{JafarianDePersis13ACC}. 

Here, $A=\argmin f$, and clearly $CS\subset A$. If the communication graph between agents is connected,
and $a_{ij}=a_{ji}>0$ for every undirected edge between $i$-th and $j$-th agent, then $A\subset CS$ and
thus $A=CS$. Indeed, in such a case, for any two agents, say $i$-th and $j$-th, there exist agents $i=i_0$, $i_1$, $i_2$, ... ,
$i_L=j$ such that there is an edge between $i_{l-1}$ and $i_l$, and so $a_{i_{l-1},i_l}=a_{i_l,i_{l-1}}>0$, for each $l=1,2,\dots,L$; 
$f(x)=\min f=0$ is possible only if $a_{i-1,i}(x_{i-1}-x_i)^2=0$ for each $l=1,2,\dots,L$, and
so $x_{i-1}=x_i$ for each $l=1,2,\dots,L$, and consequently, $x_i=x_j$. 
Theorem \ref{main theorem} implies that, in this case, agents reach consensus. 
This conclusion is, of course, well-known; see, for example, \cite[Lemma 1]{OlfatiSaberFaxMurray07IEEE}. 

On the other hand, if $CS\not=A$, which requires that the communication graph be not connected,
for some initial conditions (in particular, those in $A\setminus CS$, which are equilibria of \eqref{linear consensus}),
the agents do not reach consensus. 
\end{example} 

\begin{example} 
\label{linear infty example} 
Let $f:\reals^n\to\reals$ be 
$$
f(x)=\frac{1}{4} \sum_{i,j=1}^k a_{ij}\|x_i-x_j\|^2_\infty,
$$
where, for $i,j=1,\dots,k$, $a_{ij}=a_{ji}\geq 0$, and $\|u\|_\infty=\max_{i=1,2,\dots,m}|u_i|$. 
Let $C=\reals^n$. 
Then $f$ is a convex function and $\xdot\in-\partial f(x)$ becomes
$$
\xdot_i=\sum_{j=1}^k a_{ij} \left( x_{j,l(i,j)}-x_{i,l(i,j)} \right), 
$$
when $x$ is such that, for every $i,j=1,2,\dots,k$, there exists a unique $l(i,j)\in\{1,2,\dots,m\}$ such that
$\|x_i-x_j\|_\infty=|x_{i,l(i,j)}-x_{j,l(i,j)}|$. Then, the contribution of the $j$-th agent to the velocity 
of the $i$-th one is only in the coordinate in which their positions differ the most. 
At other points $x$, $f$ is not differentiable and $\xdot$ can be found as the minimum norm element
of $-\partial f(x)$.

If the norm is not squared, in the formula for $f$, then the dynamics, at $x$ as specified above, is
$$
\xdot_i=\frac{1}{2}\sum_{j=1}^k a_{ij}  \sign \left( x_{j,l(i,j)}-x_{i,l(i,j)} \right). 
$$
\end{example} 

\begin{example} 
\label{linear q example} 
Let $f_q:\reals^n\to\reals$ be given by 
$$
f_q(x)=\frac{1}{4} \sum_{i,j=1}^k a_{ij}(q)(x_i-x_j)^2,
$$
where, for $i,j=1,\dots,k$, $a_{ij}(q)=a_{ji}(q)\geq 0$. Let $C_q=\reals^n$. Then, as suggested by 
Example \ref{linear example}, the switching system \eqref{ss} is
\be 
\label{linear q consensus}
\xdot_i=\sum_{j=1}^k a_{ij}(q)\left( x_j-x_i \right).
\ee 
Here, $A=\cap_{q\in Q} \argmin f_q$. Clearly, $CS\subset A_q$ for each $q$ and thus $CS\subset A$.
If 
\begin{itemize} 
  \item[(CG)] 
the graph given by the union of the communication graphs associated to each $q \in Q$ is connected, 
and, for each $q \in Q$, $a_{ij}(q)=a_{ji}(q)>0$ for every undirected edge between $i$-th and $j$-th agent in the $q$-th graph,

\end{itemize} 
then $A\subset CS$ and thus $A=CS$. 
Under Assumption \ref{sigma assumption}, Theorem \ref{main theorem} implies that, in this case, agents reach consensus. 
\end{example} 

\begin{example} 
\label{linear q local example} 
Let $f_q:\reals^n\to\reals$ be 
$$
f_q(x)=\sum_{i=1}^k g_i(x_i) + \frac{1}{4} \sum_{i,j=1}^k a_{ij}(q)(x_i-x_j)^2,
$$
where $g_i:\reals^n\to\reals$ are differentiable convex functions 
and $a_{ij}(q)=a_{ji}(q)\geq 0$. Let $C_q=\reals^n$. 
Then, the switching system \eqref{ss} is
\be 
\label{local linear q consensus}
\xdot_i=-\nabla g_i(x_i)+\sum_{j=1}^k a_{ij}(q)\left( x_j-x_i \right),
\ee 
and $-\nabla g_i(x_i)$ can be considered as local dynamics of the $i$-th agent. 
If $D:=\cap_{i=1}^k D_i\not=\emptyset$, where $D_i:=\argmin g_i$, then for every
$\xi\in D$, $(\xi,\xi,\dots,\xi)\in A$. If, additionally, (CG) holds, then every
$a\in A$ has the form $(\xi,\xi,\dots,\xi)$ with $\xi\in D$. 
Thus, under both conditions, agents reach consensus while also finding a common
minimizer to $g_i$. This recovers \cite[Theorem 4.1]{ShiProutiereJohansson15SICON};
in fact generalizes it to the switching setting.

Furthermore, if $g_i(x_i)=\frac{1}{2}\dist_{D_i}(x_i)^2$, where 
$D_i\subset\reals^m$ is a nonempty closed and convex set, and $\dist_{D_i}(x_i)$ 
is the distance of $x_i$ from $D_i$, then
$\nabla g_i(x_i)=x_i-P_{D_i}(x_i)$, where $P_{D_i}(x_i)$ is the projection of $x_i$ onto
$D_i$, i.e., the point in $D_i$ closest to $x_i$, and \eqref{local linear q consensus} becomes
$$
\xdot_i=P_{D_i}(x_i)-x_i+\sum_{j=1}^k a_{ij}(q)\left( x_j-x_i \right).
$$
If $D:=\cap_{i=1}^k D_i\not=\emptyset$ and (CG) holds, every $a\in A$ has 
the form $(\xi,\xi,\dots,\xi)$ with $\xi\in D$, and agents
reach consensus while finding a point in this intersection. This is an autonomous
version of a result in \cite{ShiJohanssonHong13TAC}; the time-varying case
can be addressed as in Section \ref{time varying case section}. 
\end{example}

\begin{example}
\label{linear q projection example 0}
Let $f_q$ be as in Example \ref{linear q example} and $C$ be given by
$$
C=C_1\times C_2\times\dots\times C_k
$$
for nonempty closed convex $C_i\subset\reals^m$. 
Then, the switching system \eqref{ss} has the same solutions as the system
$$
\xdot_i=P_{T_{C_i}(x_i)}\left( \sum_{j=1}^k a_{ij}(q)\left( x_j-x_i \right) \right),
$$
where $P_{T_{C_i}(x_i)}(v)$ is the projection of $v$ onto the space tangent to $C_i$ at $x_i$;
see Section \ref{convex background section} for justification.   
Thus, \eqref{ss} reduces to \eqref{linear q consensus} projected onto $C_i$. Note that
the projection can be done locally, i.e., by each agent independently.   
This is the continuous-time version of the dynamics considered, for example, in \cite{NedicOzdaglarParrilo10TAC}.
\end{example}

\section{Proof of the main result}

\subsection{Auxiliary convergence result}

\begin{theorem} 
\label{uber theorem}
Let $Q=\{1,2,\dots,p\}$. 
Let $\sigma$ be a switching signal that satisfies Assumption \ref{sigma assumption} 
and $\sol:[0,\infty)\to\reals^n$ be a bounded, uniformly continuous, and locally absolutely continuous function. 
Suppose that there exist
\begin{itemize}
  \item a differentiable function $V:\reals^n\to\reals$ which is bounded below;
  \item for every $q\in Q$, a lower semicontinuous function $W_q:\reals^n\to[0,\infty)$;
\end{itemize} 
such that, 
\begin{itemize} 
  \item[(a)] 
for almost all $t\in[0,\infty)$, 
\be 
\label{V decrease}
\frac{d}{dt}V(\sol(t))\leq-W_{\sigma(t)}(\sol(t))
\ee
  \item[(b)] 
$\ds{A:=\bigcap_{q\in Q} A_q\not=\emptyset}$ where $\ds{A_q:=\argmin W_q}$;
  \item[(c)] 
for every $q\in Q$, on every interval on which $\sigma(t)=q$, for every $a_q\in A_q$,
$t\mapsto\|\sol(t)-a_q\|$ is nonincreasing.
\end{itemize} 
Then $\sol(t)$ converges, as $t\to\infty$ to a point in $A$. 
\end{theorem}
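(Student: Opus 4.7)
The plan is to combine three ingredients --- integrability from (a), Fej\'er monotonicity from (c), and the fair-coverage condition of Assumption \ref{sigma assumption} --- via an Opial-type reduction.

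First I would integrate (a): since $V$ is bounded below and $W_{\sigma(t)}\ge 0$, $V(\sol(t))$ is nonincreasing and converges, whence $\int_0^\infty W_{\sigma(t)}(\sol(t))\,dt<\infty$ and therefore $\int_{T_q(\sigma)}W_q(\sol(t))\,dt<\infty$ for each $q\in Q$. For any $a\in A\subset\bigcap_q A_q$, condition (c) applied on each interval of constancy of $\sigma$, combined with continuity of $\sol$ at the switching instants, shows that $t\mapsto\|\sol(t)-a\|$ is nonincreasing on $[0,\infty)$; hence $\ell(a):=\lim_{t\to\infty}\|\sol(t)-a\|$ exists for every $a\in A$, i.e., $\sol$ is Fej\'er monotone with respect to $A$.

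Second, boundedness of $\sol$ makes the $\omega$-limit set $\Omega$ of $\sol$ nonempty, and an Opial-type observation reduces the proof to exhibiting a single $y^\ast\in\Omega\cap A$: for such a $y^\ast$, along any subsequence $\sol(t_k)\to y^\ast$ the Fej\'er property forces $\ell(y^\ast)=0$, hence $\sol(t)\to y^\ast$. To produce candidates for $y^\ast$, for each $q\in Q$ the integrability of $W_q\circ\sol$ over $T_q(\sigma)$ together with $\mu(T_q(\sigma))=\infty$ yields a sequence $t^q_k\in T_q(\sigma)$ with $W_q(\sol(t^q_k))\to 0$; boundedness and lower semicontinuity of $W_q$ then give a subsequential limit $y_q\in A_q\cap\Omega$. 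Applying (c) with $a_q=y_q$ on the $q$-interval $[s^q_k,e^q_k]$ containing $t^q_k$ also shows $\|\sol(t)-y_q\|$ is nonincreasing on $[t^q_k,e^q_k]$, so $\sol(e^q_k)\to y_q$ as well.

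The hard part will be to consolidate the $q$-dependent limits $y_q\in A_q\cap\Omega$ into a single element of $A=\bigcap_q A_q$. I would attempt this by cycling through the finite set $Q$: starting from the sequence $e^{q_0}_k$ along which $\sol\to y_{q_0}$, Assumption \ref{sigma assumption} guarantees that after each $e^{q_0}_k$ every remaining label $q$ contributes positive measure in $T_q(\sigma)$; within each such $q$-interval, (c) drives $\sol$ toward $A_q$, uniform continuity bounds the drift of $\sol$ over any time window, and the Fej\'er constraint $\|y-a\|=\ell(a)$ for every $a\in A$ and $y\in\Omega$ pins all accumulation points of $\sol$ to a common spherical locus. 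A diagonal extraction across $q\in Q$ should then deliver $y^\ast\in\bigcap_q A_q=A$ as an accumulation point of $\sol$, after which the Opial reduction above completes the proof. Making this consolidation rigorous --- particularly without a positive dwell-time hypothesis under which the window after each $e^{q_0}_k$ in which all other $q$ must be visited would be uniformly bounded --- is the principal technical obstacle.
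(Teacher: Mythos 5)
Your setup matches the paper's up to the point you yourself flag as the obstacle: both arguments extract, for each $q$, times $t^q_k\in T_q(\sigma)$ with $W_q(\sol(t^q_k))\to 0$ (your integrability route and the paper's contradiction route are interchangeable here), pass to a subsequential limit $a_q\in A_q$ in the $\omega$-limit set, and note that once a single accumulation point is found in $A=\bigcap_q A_q$, Fej\'er monotonicity from (c) finishes the proof. The gap is exactly where you say it is: consolidating the $a_q$ into one point of $A$ is the entire content of the theorem, and your sketch (cycling through $Q$, a ``common spherical locus,'' diagonal extraction) does not contain a working mechanism. In particular, the observation that every $y$ in the $\omega$-limit set satisfies $\|y-a\|=\ell(a)$ for all $a\in A$ only confines the limit set to an intersection of spheres centered at points of $A$; it cannot by itself force an accumulation point into $A$ (in the paper's rotation example following the theorem, the whole $\omega$-limit set lies on such a locus yet is disjoint from $A$ --- that example violates (c), but it shows the Fej\'er-plus-locus information you rely on is not what does the work). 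The absence of a dwell time is not the real difficulty either.

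The missing ingredient is an annulus-crossing argument that uses (c) for the individual sets $A_q$, not just for $A$. Set $Q_1=\{q\in Q:\ a_1\in A_q\}$ and suppose some $q$, say $q=2$, is not in $Q_1$. Pick $d>0$ smaller than the distance from $a_1$ to every $A_q$ with $q\notin Q_1$, and let $\delta=\min_{q\notin Q_1}\min_{\|x-a_1\|\le d}W_q(x)>0$ (lower semicontinuity and $A_q=\argmin W_q$). Since $\sol$ returns infinitely often to within $d/2$ of $a_1$ (along $t^1_k$) and also to distance greater than $d$ from $a_1$ (along $t^2_k$, because $\|a_2-a_1\|>d$), it crosses the annulus $d/2\le\|x-a_1\|\le d$ outward infinitely many times. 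During each crossing, condition (c) applied with the point $a_1\in A_{\sigma(t)}$ forbids the modes in $Q_1$ from increasing $\|\sol(t)-a_1\|$, so the outward displacement of $d/2$ must be accumulated while $\sigma(t)\notin Q_1$; uniform continuity of $\sol$ then yields a uniform lower bound $\eta>0$ on the measure of this ``bad-mode'' time in each crossing, and on that time set $\frac{d}{dt}V(\sol(t))\le-W_{\sigma(t)}(\sol(t))\le-\delta$. Summing over infinitely many crossings contradicts $V$ being bounded below, so $Q_1=Q$ and $a_1\in A$. This is the step your proposal would need to supply before the Opial/Fej\'er reduction you describe can be invoked.
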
 

The result above borrows some ideas from results involving multiple Lyapunov functions, for example 
\cite[Theorem 7]{HespanhaLiberzonAngeliSontag05TAC}, and possibly other related results \cite{DanieL},
but $V$ and $W_q$ are not related through observability or detectability-like conditions,
and assumption (c) is quite specific for the subdifferential flow for a convex function and
unusual for the multiple Lyapunov functions results. Without this assumption, the result fails.
Indeed, consider $\xdot=y$, $\dot{y}=-x$, $V(x,y)=x^2+y^2$,
and $W_q$, for $q=1,2,3,4$ being the distance from the $q$-th quadrant. For any complete (and periodic!) solution $\sol$ not from the origin, 
with $\sigma(t)=q$ if $\sol(t)$ is in the $q$-th quadrant, all assumptions except (d) are satisfied, 
in particular \eqref{V decrease} holds with both sides equal to $0$. Note that also, when $\sigma(t)=q$, then the distance of $\sol(t)$
from the $q$-th quadrant is $0$ and thus nondecreasing. But the conclusion fails: $A$ is the origin.

\

\begin{proof} 
Let the sets $T_q(\sigma)$ come from Assumption \ref{sigma assumption}.
For each $q\in Q$, there exists a sequence $t_{q,j}\to\infty$ as $j\to\infty$ such that $t_{q,j}\in T_q(\sigma)$ and 
$W_q(\sol(t_{q,j}))\to 0$. 
Indeed, in the opposite case there exists $\delta>0$ such that, for all large enough $t\in T_q(\sigma)$,
$W_q(\sol(t))\geq\delta$ and thus $\frac{d}{dt}V(\sol(t))\leq-\delta$. 
Since $\mu(T_q(\sigma))=\infty$, this contradicts $V$ being bounded below.  
Since $\sol$ is bounded, without loss of generality one can assume that $\sol(t_{q,j})$ converge, 
as $j\to\infty$, to a limit denoted $a_q$. By lower semicontinuity and nonnegativity of $W_q$, $W(a_q)=0$ and $a_q\in A_q$. 
 
Let $Q_1=\{q\in Q\, |\, a_1\in A_q\}$. It will be shown that $Q_1=Q$. 
Suppose that $Q_1\not=Q$ and, without loss of generality, that $2\not\in Q_1$. 
Let $d>0$ be such that 
$$d<\min_{q\not\in Q_1} \min_{x\in A_q} \|x-a_1\|,$$
and define  
$$
\delta=\min_{q\not\in Q_1} \min_{x:\|x-a_1\|\leq d} W_q(x),
$$
which is positive since $W_q$ are continuous and if $\|x-a_1\|\leq d$ then $x\not\in A_q$ for $q\not\in Q_1$. 
Recall that $\sol(t_{1,j})\to a_1$ and $\sol(t_{2,j})\to a_2$ as $j\to\infty$. 
Thus, there exist sequences $\tau_j$, $\tau'_j$ with $\tau_j<\tau'_j<\tau_{j+1}$ 
such that, for each $j=1,2,\dots$, 
$$\|\sol(\tau_j)-a_1\|=d/2, \quad \|\sol(\tau'_j)-a_1\|=d,$$
$$d/2\leq\|\sol(t)-a_1\|\leq d\ \forall t\in[\tau_j,\tau'_j].$$
For almost all $t\in[\tau_j,\tau_j']$ such that $\sigma(t)\in Q_1$, i.e., such that $a_1\in A_{\sigma(t)}$,
$\|\sol(t)-a_1\|$ is nonincreasing. 
Let $S_j:=\{t\in[\tau_j,\tau'_j]\, |\, \sigma(t)\not\in Q_1\}$.
Since $\|\sol(\tau'_j)-\sol(\tau_j)\|\geq d/2$ and $\sol$ is uniformly continuous, there exists $\eta>0$ such that,
for each $j$, $\mu(S_j)\geq\eta$.
But for almost all $t\in S:=\bigcup_{j=1}^\infty S_j$, 
$\frac{d}{dt}V(\sol(t))\leq-\delta$. 
Since $\mu(S)=\infty$, this is a contradiction!
Consequently, $Q_1=Q$ and $a_1\in A$.
Assumption (c) ensures now that $\|\sol(t)-a_1\|$ is nonincreasing and hence $\sol(t)$ converges to $a_1$. 
\end{proof}

\subsection{Convex analysis background}
\label{convex background section} 

For details on the convex analysis material collected below, see \cite{VA}.
For details on differential inclusions, including \eqref{subdifferential inclusion}, see
\cite{Brezis71}, \cite{AubinCellina}. 

Let $g:\reals^n\to\reals\cup\{\infty\}$ be a proper (i.e., finite somewhere), lower semicontinuous (lsc), 
and convex function. Let $\dom g$ be the {\em effective domain} of $g$, i.e. the set $\{x\in\reals^n\, |\, g(x)<\infty\}$.
The {\em convex subdifferential} mapping of $g$ is the set-valued mapping $\partial g:\reals^n\rightrightarrows\reals^n$,
with $\partial g(x)$ given by
\be 
\label{convex subdifferential}
\left\{y\in\reals^n\, |\, g(x')\geq g(x)+y\cdot(x'-x)\ \forall x'\in\reals^n\right\}.
\ee 
The subdifferential mapping $\partial g$ is maximal monotone, and consequently, for the
differential inclusion
\be 
\label{subdifferential inclusion}
\xdot\in-\partial g(x)
\ee 
one has
\begin{itemize} 
\item[(a)] 
For every $x_0$ in the closure of $\dom g$ 
there exists a unique maximal solution to \eqref{subdifferential inclusion} 
with $\sol(0)=x_0$ and this solution is complete.
\item[(b)] 
For any two complete solutions $\sol$, $\sola$ to \eqref{subdifferential inclusion}, $t\mapsto\|\sol(t)-\sola(t)\|$ 
is nonincreasing.  In particular, the solutions to \eqref{subdifferential inclusion} depend continuously on initial conditions, in the uniform norm over $[0,\infty)$. 
 \item[(c)] 
For every solution $\sol$ to \eqref{subdifferential inclusion} and for almost all $t\geq 0$ 
$$\xdot(t)=m\left(-\partial g(x(t))\right),$$
where $m(S)$ is the element of the closed set $S$ with minimum norm. 
\end{itemize}

Now let $g:\reals^n\to\reals$ be convex and let $D\subset\reals$ be a nonempty, closed, and convex set.  
For such $g$, $\partial g$ is locally bounded. 
Define $g_D:\reals^n\to\reals\cup\{\infty\}$ by
\be 
\label{gD}
g^D(x)=\left\{\begin{matrix} g(x) & \mbox{if} & x\in D \cr \infty & \mbox{if} & x\not\in D\end{matrix}\right.
\ee
Then $g^D$ is proper, lower semicontinuous, and convex, and its subdifferential is given by
$$
\partial g^D(x)=\left\{\begin{matrix} \partial g(x) & \mbox{if} & x\in\iint D \cr 
\partial g(x)+N_D(x) & \mbox{if} & x\in\bdry D \cr
\emptyset & \mbox{if} & x\not\in D\end{matrix}\right.
$$
Above, $\iint D$ and $\bdry D$ stand for the interior and the boundary of $D$, and
$N_D(x)$ is the {\em normal cone} to $D$ at $x$, given by 
$$N_D(x)=\{v\in\reals^n\, |\, v\cdot(x'-x)\leq 0\ \forall x'\in D\}.$$
Since $0\in N_D(x)$ for every $x\in D$, the minimum norm element of $-\partial g(x)-N_D(x)$ is locally bounded. 
The general facts above can be applied to $M_q=\partial f_q+N_{C_q}$.

\begin{proposition} 
\label{existence etc}
Under Assumption \ref{data assumption},
given a switching signal $\sigma:[0,\infty)\to Q$, 
and $x_0\in C_{\sigma(0)}$, 
there exists a unique maximal solution $\sol:\dom\sol\to\reals^n$ to \eqref{ss} with $\sol(0)=x_0$.
If this solution is bounded, it is uniformly continuous.  
This solution is complete if, for example, $C_1=C_2=\dots=C_p$. 
\end{proposition}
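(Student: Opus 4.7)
The plan is to build $\phi$ piecewise on the intervals $[t_j,t_{j+1})$ of constancy of $\sigma$, using property (a) of the convex background section on each piece, and then to concatenate. The key observation is that on any such interval with $\sigma\equiv q$, the inclusion \eqref{ss} is identical to $\xdot\in-\partial f_q^{C_q}(x)$ for $f_q^{C_q}$ defined as in \eqref{gD}: for $x\in C_q$ one has $\partial f_q^{C_q}(x)=\partial f_q(x)+N_{C_q}(x)=M_q(x)$, while both sides are empty for $x\notin C_q$. Since $f_q^{C_q}$ is proper, lsc, and convex with $\overline{\dom f_q^{C_q}}=C_q$, (a) supplies a unique complete solution starting from any point in $C_q$.

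First I would set $\phi(0)=x_0\in C_{\sigma(0)}$ and invoke (a) to obtain $\phi$ uniquely on $[0,t_1]$. Inductively, once $\phi$ has been defined up to $t_j$, test whether $\phi(t_j)\in C_{\sigma(t_j)}$: if so, (a) extends $\phi$ uniquely to $[t_j,t_{j+1}]$; if not, $[0,t_j)$ is the domain of the maximal solution. Uniqueness on each piece yields uniqueness of the maximal concatenation, and since switching times have no finite accumulation point, the resulting $\phi$ is locally absolutely continuous on its domain.

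For uniform continuity of a bounded $\phi$, I would combine property (c), which gives $\xdot(t)=m(-M_{\sigma(t)}(\phi(t)))$ almost everywhere, with the background remark that the minimum-norm element of $-\partial f_q(x)-N_{C_q}(x)$ is locally bounded. If $\phi$ takes values in a compact $K\subset\reals^n$, then for each of the finitely many $q\in Q$ the quantity $\|m(-M_q(x))\|$ is bounded on $K\cap C_q$; taking the maximum over $q$ bounds $\|\xdot\|$ uniformly in $t$, and the resulting Lipschitz estimate gives uniform continuity.

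For completeness under $C_1=C_2=\dots=C_p=:C$, the induction above cannot stall: closedness of $C$ together with continuity of $\phi$ force $\phi(t_j)\in C=C_{\sigma(t_j)}$ at every switching time, so the construction extends across every $t_j$, and since $t_j\to\infty$ one obtains $\dom\phi=[0,\infty)$. I do not anticipate a serious obstacle here; the most delicate point is simply recognizing that when the $C_q$ differ, the only way a maximal solution can fail to be complete is for $\phi(t_j)$ to leave $C_{\sigma(t_j)}$ at a switching time, which the common-$C_q$ hypothesis precludes.
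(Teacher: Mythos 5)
Your proof is correct and is essentially the argument the paper intends: the paper offers no explicit proof of this proposition beyond the remark that the single-mode facts (a)--(c) apply to $M_q=\partial f_q^{C_q}$, and your piecewise construction on the intervals of constancy of $\sigma$, the concatenation argument, and the use of the locally bounded minimum-norm selection for uniform continuity are exactly the fleshing-out of that remark. (One cosmetic point: when the construction stalls at a switching time $t_j$ because $\phi(t_j)\notin C_{\sigma(t_j)}$, the maximal domain is the closed interval $[0,t_j]$ rather than $[0,t_j)$, since the left limit exists and the inclusion need only hold almost everywhere.)
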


If $g:\reals^n\to\reals$ is differentiable, then for $x\in\bdry D$, the minimum norm element of $-\nabla g(x)-N_D(x)$
is the same as the projection of $-\nabla g(x)$ onto the tangent cone to $D$ at $x$:
$$-m\left(\nabla g(x)+N_D(x)\right)=P_{T_D(x)}\left(-\nabla g(x)\right),$$
see \cite{Henry73JMAA} or the recent \cite[Corollary 2]{BrogliatoDaniilidisLemarechalAcary06SCL}.
Thus, solutions to $\xdot\in-\partial g^D(x)$ are the same as solutions to projected gradient
dynamics $\xdot=P_{T_D(x)}\left(-\nabla g(x)\right)$. 
Above, the {\em tangent cone} to $D$ at $x$ is
$$T_D(x)=\{u\in\reals^n\, |\, u\cdot v\leq 0\ \forall v\in N_D(x)\}$$

\subsection{Proof of Theorem \ref{main theorem}}

Take $a\in A$. Let $V(x)=\frac{1}{2}\|x-a\|^2$. 
Let $\sigma$ be a switching signal and $\sol:\dom\sol\to\reals^n$ a solution to \eqref{ss}. 
Then, since $M_q=\partial f_q^{C_q}$ where $f_q^{C_q}$ is constructed from $f_q$ and $C_q$ via \eqref{gD},
$$
\frac{d}{dt}V(\sol(t))=(\sol(t)-a)\cdot\soldot(t)\leq f^{C_{\sigma(t)}}_{\sigma(t)}(a)-f^{C_{\sigma(t)}}_{\sigma(t)}(\sol(t))\leq 0
$$
where the first inequality above comes directly from the definition of the subdifferential \eqref{convex subdifferential}.
This confirms Lyapunov stability of $a$ for \eqref{ss} and ensures that $\sol$ is bounded.
Let $\sol$ be complete. 
For $q\in Q$, let $W_q:\reals^n\to[0,\infty)$ be given by $W_q(x)=f^{C_q}_q(x)-\min_{x\in C_q} f_q(x)$. Note that
$W_q(x)=f_q(x)-\min_{x\in C_q} f_q(x)$ for $x\in C_q$, $W_q(x)=\infty$ otherwise, and so $W_q$ is lsc. 
Then $\sol$ satisfies \eqref{V decrease} and Theorem \ref{uber theorem} implies the result.

\section{Extensions} 

Without Assumption \ref{sigma assumption}, one can still conclude convergence of solutions
to \eqref{ss}, with limits in a set that depends on the switching signal. 
Given a switching signal $\sigma$, let 
$$
Q_\infty(\sigma):=\{q\in Q\, |\, T_q(\sigma)=\infty\}, \quad
A_\infty(\sigma):=\bigcap_{q\in Q_\infty} A_q.
$$

\begin{corollary} 
\label{main infty theorem}
Under assumptions \ref{data assumption} and \ref{A assumption}, given a switching signal $\sigma$,
every complete solution to \eqref{ss} is such that $\lim_{t\to\infty} x(t)$ exists and belongs to $A_\infty(\sigma)$. 
\end{corollary}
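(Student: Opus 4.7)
My plan is to retrace the proof of Theorem \ref{uber theorem}, replacing the index set $Q$ by $Q_\infty(\sigma)$ throughout, while carefully accounting for the contribution of the finitely-visited modes. The key observation is that the set $F := \{t \ge 0 \,:\, \sigma(t) \notin Q_\infty(\sigma)\}$ has \emph{finite} Lebesgue measure, so those modes cannot absorb an infinite-measure set of time and contribute only a bounded amount of ``perturbation'' to any Lyapunov-like quantity. First I would set up preliminaries: since Assumption \ref{A assumption} holds and $A \subseteq A_\infty(\sigma)$, picking any $a \in A$ and using the subdifferential inequality as in the proof of Theorem \ref{main theorem} shows $\|\phi(t)-a\|$ is nonincreasing, so $\phi$ is bounded and, by Proposition \ref{existence etc}, uniformly continuous. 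Because each $\partial f_q$ is locally bounded and $0 \in N_{C_q}(x)$, the minimum-norm selection satisfies $\|\dot\phi(t)\| \le L$ for some uniform $L$, since $\phi$ stays in a compact set and $Q$ is finite. Then, exactly as in the proof of Theorem \ref{uber theorem}, for each $q \in Q_\infty(\sigma)$ I extract a sequence $t_{q,j} \to \infty$ in $T_q(\sigma)$ with $W_q(\phi(t_{q,j})) \to 0$, and pass to a subsequence so that $\phi(t_{q,j}) \to a_q \in A_q$.

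The main step is to prove $a_1 \in A_\infty(\sigma)$, i.e., that $Q_1 := \{q \in Q_\infty(\sigma) \,:\, a_1 \in A_q\}$ equals $Q_\infty(\sigma)$. I would argue by contradiction, assuming some $q^\ast \in Q_\infty(\sigma) \setminus Q_1$ exists, and build the same ``excursion'' intervals $[\tau_j,\tau_j']$ on which $\|\phi(\cdot)-a_1\|$ moves from $d/2$ to $d$, with $d,\delta>0$ chosen so that $W_q(x) \ge \delta$ whenever $\|x-a_1\|\le d$ and $q \in Q_\infty(\sigma)\setminus Q_1$. Uniform continuity gives $\mu(\hat S_j) \ge \eta > 0$ for $\hat S_j := \{t \in [\tau_j,\tau_j'] \,:\, \sigma(t) \notin Q_1\}$. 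The decomposition $\hat S_j = S_j \cup S_j'$, with $S_j$ the subset where $\sigma(t)\in Q_\infty(\sigma)\setminus Q_1$ and $S_j' \subseteq F$, combined with $\mu(F)<\infty$, forces $\mu(\bigcup_j S_j) = \infty$. On $\bigcup_j S_j$ the Lyapunov function $V(x) = \tfrac{1}{2}\|x-a\|^2$ decreases at rate at least $\delta$, contradicting $V$ being bounded below, and yielding $a_1 \in A_\infty(\sigma)$.

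Finally, I would upgrade the subsequential limit $\phi(t_{1,j}) \to a_1$ to $\phi(t) \to a_1$. Let $V_1(x) := \|x-a_1\|^2$. On intervals with $\sigma(t) \in Q_\infty(\sigma)$, since $a_1 \in A_{\sigma(t)}$, the subdifferential inequality gives $\tfrac{d}{dt}V_1(\phi(t)) \le 0$; on $F$, only the crude estimate $\tfrac{d}{dt}V_1(\phi(t)) \le 2RL$ is available, where $R$ bounds $\|\phi-a_1\|$. Given $\varepsilon>0$, since $\mu(F \cap [T,\infty)) \to 0$ as $T\to\infty$, I would choose $T$ with $2RL\,\mu(F\cap[T,\infty)) < \varepsilon$ and then $j$ with $t_{1,j} \ge T$ and $V_1(\phi(t_{1,j})) < \varepsilon$; integrating then gives $V_1(\phi(t)) \le 2\varepsilon$ for all $t \ge t_{1,j}$, whence $\phi(t) \to a_1$. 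I expect the contradiction in the previous paragraph to be the main obstacle: the delicate point is isolating $S_j$ from $S_j'$ and invoking $\mu(F)<\infty$ at exactly the right place, so that the infinite-measure argument of Theorem \ref{uber theorem} can still be run on the subset where modes in $Q_\infty(\sigma) \setminus Q_1$ are active.
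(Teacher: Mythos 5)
Your proof is correct, and it takes exactly the route the paper intends: the paper offers only the one-line remark that the corollary ``can be deduced from the proof of Theorem \ref{main theorem},'' and your argument is a faithful, fully detailed version of that deduction, rerunning the proof of Theorem \ref{uber theorem} over $Q_\infty(\sigma)$ and using $\mu(F)<\infty$ to discard the finitely-visited modes. You correctly identify the two places where the original proof genuinely needs modification rather than mere relabeling: in the contradiction step, the excursion sets $\hat S_j$ must be split so that the infinite total measure lands on the modes in $Q_\infty(\sigma)\setminus Q_1$ (the portion in $F$ being summable); and in the final step, assumption (c) of Theorem \ref{uber theorem} no longer yields monotonicity of $\|\phi(t)-a_1\|$ on $F$, so your $\varepsilon$-argument combining $V_1(\phi(t_{1,j}))\to 0$ with the bound $2RL\,\mu(F\cap[T,\infty))\to 0$ is precisely the extra ingredient needed to upgrade subsequential to full convergence. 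The only cosmetic caveats are that you should note $Q_\infty(\sigma)\neq\emptyset$ (which holds since $Q$ is finite and the $T_q(\sigma)$ cover $[0,\infty)$) and that $W_q$ is only lower semicontinuous, not continuous, which still suffices for $\delta>0$ on the compact ball.
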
 

This can be deduced from the proof of Theorem \ref{main theorem}.
For consensus purposes, the question then becomes whether $A_\infty\subset CS$. 

Two other extensions of Theorem \ref{main theorem} conclude this technical note.

\subsection{Switching and mildly time-varying case} 
\label{time varying case section} 

Consider the switching system
\be 
\label{sst}
\xdot\in -M_q(x,t).
\ee 

\begin{assumption}
\label{time varying assumption}\ 
$Q=\{1,2,\dots,p\}$, and for every $q\in Q$, $M_q:\reals^n\times[0,\infty)\rrarrows\reals^n$ 
is a set-valued mapping given by 
$$
M_q(x,t)=\partial f_q(x,t)+N_{C_q}(x),
$$
where $f_q:\reals^n\times[0,\infty)\to\reals$ is a function convex in $x$ for each fixed $t$;
$\partial f_q(x,t)$ is the subdifferential, in the sense of convex analysis, of $x\mapsto f_q(x,t)$ at $x$;
$C_q\subset\reals$ is a nonempty closed convex set; and $N_{C_q}(x)$ is the normal cone to $C_q$ at $x$. 
Additionally 
\begin{itemize} 
  \item[(a)] 
for each $q\in Q$, there exists a closed convex set $A_q\subset\reals^n$ such that, for every $t\in[0,\infty)$,  
$$\argmin_{x\in C_q} f_q(x,t)=A_q;$$
  \item[(b)] the set $A:=\bigcap_{q\in Q} A_q$ is nonempty;
  \item[(c)] 
for each $q\in Q$, there exists a convex function $g_q:\reals^n\to\reals$ such that
\begin{itemize} 
  \item $\argmin_{x\in C_q} g_q(x)=A_q$;
  \item for every $x\in\reals^n$, $t\in[0,\infty)$, $g_q(x)\leq f_q(x,t)$;
\end{itemize} 
  \item[(d)]
for each $q\in Q$, $\partial f_q(x,t)$ are locally bounded in $x$ uniformly in $t\in[0,\infty)$. 
\end{itemize} 
\end{assumption} 

\begin{theorem} 
\label{time varying theorem}
Under assumptions \ref{time varying assumption} and \ref{sigma assumption},
every complete solution to \eqref{sst} is such that $\lim_{t\to\infty} x(t)$ exists and belongs to $A$. 
\end{theorem}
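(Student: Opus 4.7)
The plan is to adapt the proof of Theorem \ref{main theorem} to the time-dependent setting, using Theorem \ref{uber theorem} as a blueprint but letting the dominating convex function $g_q$ from Assumption \ref{time varying assumption}(c) play the role of its time-independent function $W_q$. Fix $a\in A$, set $V(x)=\frac{1}{2}\|x-a\|^2$, and take a complete solution $\sol$. The subdifferential inequality for $f_{\sigma(t)}(\cdot,t)$ at $\sol(t)$ applied to $a$, combined with the normal cone estimate (valid since $a\in A_{\sigma(t)}\subset C_{\sigma(t)}$ by (a)--(b)), yields
\[
\frac{d}{dt}V(\sol(t))\leq f_{\sigma(t)}(a,t)-f_{\sigma(t)}(\sol(t),t)\leq 0
\]
at almost every $t$, so $V(\sol(t))$ is nonincreasing, $\sol$ is bounded, and assumption (d) gives boundedness of $\soldot$ along $\sol$, hence uniform continuity of $\sol$. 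Replacing $a$ by any $\xi\in A_q$ in the same computation shows $t\mapsto\|\sol(t)-\xi\|$ is nonincreasing on every interval on which $\sigma\equiv q$, which is the analog of condition (c) of Theorem \ref{uber theorem}.

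Next I would integrate the Lyapunov bound over $T_q(\sigma)$---which has infinite measure---to obtain $t_{q,j}\to\infty$ in $T_q(\sigma)$ along which $f_q(\sol(t_{q,j}),t_{q,j})-f_q(a,t_{q,j})\to 0$, and pass to a further subsequence so that $\sol(t_{q,j})\to a_q$ for some $a_q\in C_q$. Identifying $a_q\in A_q$ is the crucial step: the uniform local Lipschitz bound from (d) gives $f_q(a_q,t_{q,j})-f_q(a,t_{q,j})\to 0$, and combining this with $g_q\leq f_q(\cdot,t)$, $g_q(a)=\min_{C_q}g_q$, and continuity of $g_q$ along $\sol(t_{q,j})\to a_q$, one deduces $g_q(a_q)=g_q(a)$ and hence $a_q\in\argmin_{C_q}g_q=A_q$. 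With this identification, the contradiction scheme of Theorem \ref{uber theorem} transfers essentially verbatim: supposing $a_1\notin A_{q'}$ for some $q'$, extract intervals on which $\sol$ traverses the annulus $d/2\leq\|\cdot-a_1\|\leq d$ for suitable $d$; the nonexpansiveness above forbids such traversal on the portion where $\sigma\in Q_1:=\{q:a_1\in A_q\}$, while on the infinite-measure complement a uniform-in-$t$ positive lower bound on $f_{q'}(\sol(t),t)-f_{q'}(a,t)$, obtained from (c), (d), and compactness of the annulus, forces $\frac{d}{dt}V(\sol(t))\leq-\delta$, contradicting $V$ being bounded below.

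The principal difficulty is precisely this transfer to the time-varying setting of the two places where $W_q$ entered the proof of Theorem \ref{uber theorem}: the identification of subsequential limits with elements of the fixed set $A_q$, and the extraction of a uniform-in-$t$ positive lower bound on $f_{q'}(x,t)-f_{q'}(a,t)$ for $x$ in the annulus away from $A_{q'}$. Assumptions (c) and (d) are tailored precisely for this bridging---(c) supplies a single time-independent convex function whose argmin pins down $A_q$ uniformly in $t$, while (d) prevents the family $\{f_q(\cdot,t)\}_t$ from oscillating in $x$ in a way that would defeat the passage to subsequential limits.
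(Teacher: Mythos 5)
You reach the intended conclusion, but by a much longer route than the paper, and the step you yourself flag as ``crucial'' does not close as written. The paper's proof is a one-line reduction: from the subdifferential inequality and Assumption \ref{time varying assumption}(c) it writes
$\frac{d}{dt}V(\sol(t))\leq f^{C_{\sigma(t)}}_{\sigma(t)}(a,t)-f^{C_{\sigma(t)}}_{\sigma(t)}(\sol(t),t)\leq g^{C_{\sigma(t)}}_{\sigma(t)}(a)-g^{C_{\sigma(t)}}_{\sigma(t)}(\sol(t))$,
i.e., \eqref{V decrease} holds with the \emph{time-independent} lsc functions $W_q(x)=g^{C_q}_q(x)-\min_{x\in C_q}g_q(x)$, whose argmin sets are exactly $A_q$ by (c); Theorem \ref{uber theorem} then applies verbatim as a black box. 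You instead re-run the entire proof of Theorem \ref{uber theorem} with the time-varying quantity $f_q(\sol(t),t)-f_q(a,t)$ in place of $W_q(\sol(t))$, re-deriving the extraction of times $t_{q,j}$, the identification of subsequential limits, and the annulus contradiction. The point of hypothesis (c) is precisely that it lets you replace the time-varying decrease rate by a time-independent one \emph{before} invoking the auxiliary theorem, so none of that re-derivation is needed. (Your first paragraph --- the Lyapunov decrease, boundedness, uniform continuity via (d), and nonexpansiveness toward each $\xi\in A_q$ --- does match the paper's setup.)

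The genuine gap is in your identification of $a_q\in A_q$. From $f_q(a_q,t_{q,j})-f_q(a,t_{q,j})\to 0$ and $g_q\leq f_q(\cdot,t)$ you obtain $g_q(a_q)\leq\liminf_j f_q(a,t_{q,j})$; but the only control on $f_q(a,t_{q,j})$ available from the hypotheses is $f_q(a,t_{q,j})\geq g_q(a)$, which points the wrong way, so you cannot conclude $g_q(a_q)\leq g_q(a)$. Nothing in (a)--(d) forbids $f_q(a,t)-g_q(a)$ from being bounded below by a positive constant. What is needed is the normalization $\min_{x\in C_q}f_q(x,t)=\min_{x\in C_q}g_q(x)$ for all $t$ (equivalently $f_q(a,t)=g_q(a)$ for $a\in A_q$); with it both your identification and your uniform-in-$t$ lower bound on the annulus go through. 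The same normalization is what makes the paper's second displayed inequality valid (it is equivalent to $f_q(a,t)-g_q(a)\leq f_q(\sol(t),t)-g_q(\sol(t))$, which the stated hypotheses alone do not imply, though it holds in all of the paper's examples, where both minima equal zero). So the difficulty you isolate is real, but the fix is to observe that normalization and then collapse your entire second step into the direct application of Theorem \ref{uber theorem} with $W_q=g^{C_q}_q-\min_{C_q}g_q$.
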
 

\begin{proof} 
The proof relies on Theorem \ref{uber theorem} and is identical to that of Theorem \ref{main theorem}, 
after noting that the same $V$ leads to
\begin{eqnarray*}
\frac{d}{dt}V(\sol(t))
&\leq& 
f^{C_{\sigma(t)}}_{\sigma(t)}(a,t)-f^{C_{\sigma(t)}}_{\sigma(t)}(\sol(t),t) \phantom{\int} \cr
&\leq& 
g^{C_{\sigma(t)}}_{\sigma(t)}(a)-g^{C_{\sigma(t)}}_{\sigma(t)}(\sol(t)) \leq 0,
\end{eqnarray*}
one can take $W_q:\reals^n\to[0,\infty)$ given by $W_q(x)=g^{C_q}_q(x)-\min_{x\in C_q} g_q(x)$,
and uniform continuity of every bounded solution comes from Assumption \ref{time varying assumption} (d). 
\end{proof} 

\begin{example} 
\label{linear q t example} 
Let $f_q:\reals^n\times[0,\infty)\to\reals$ be given by 
$$
f_q(x,t)=\frac{1}{4} \sum_{i,j=1}^k a_{ij}(q,t)(x_i-x_j)^2,
$$
where, for $i,j=1,\dots,k$, $a_{ij}(q)=a_{ji}(q)\geq 0$. Let $C_q=\reals^n$. Then \eqref{sst} is, for $i=1,\dots,k$,
$$
\xdot_i=\sum_{j=1}^k a_{ij}(q,t)\left( x_j-x_i \right).
$$
Suppose that there exist $a_*,a^*>0$ such that, for every $q$, every $i,j=1,2,\dots,k$,
either $a_{ij}(q,t)=0$ for all $t\geq 0$ or 
$$
a_*\leq a_{ij}(q,t)\leq a^* \quad \mbox{for all}\ t\geq 0. 
$$
Assumption like the above is made, for example, by
\cite{ShiJohanssonHong13TAC},
\cite{ShiProutiereJohansson15SICON},
\cite{YangMengShiHongJohansson16TAC}.
 Then $g_q:\reals^n\to\reals$ fitting Assumption \ref{time varying assumption} (c) is
$$g_q(x)=\frac{1}{4} \sum_{i,j=1}^k a_*(x_i-x_j)^2.$$
Bounds on $a_{ij}(t)$ ensure that Assumption \ref{time varying assumption} (d) holds. 

As in the autonomous case, $CS\subset A_q$ for each $q$ and thus $CS\subset A$.
If the union of $q$-th communication graphs between agents, over all $q\in Q$, is connected, and 
and $a_{ij}(q,t)=a_{ji}(q,t)>0$ for all $t\in[0,\infty)$ 
for every undirected edge between $i$-th and $j$-th agent in the $q$-th graph,
then $A\subset CS$ and thus $A=CS$. 
Under Assumption \ref{sigma assumption}, Theorem \ref{time varying theorem} 
implies that, in this case, agents reach consensus. 

Extending the arguments above to include local dynamics as in Example \ref{linear q local example} 
recovers \cite[Theorem 5.3]{ShiProutiereJohansson15SICON}. 
Considering local dynamics given by $\xdot_i=P_{D_i}(x_i)-x_i$, as at the end of Example \ref{linear q local example},
recovers \cite[Theorem 3.1]{ShiJohanssonHong13TAC}.
\end{example}

\subsection{Demipositive maximal monotone dynamics}

Demipositivity generalizes the property of the subdifferential of a convex function $f$ that
guarantees that solutions to $\xdot\in-\partial f$ converge to minima of $f$. Original definition
given by \cite{Bruck75JFA} was in an infinite-dimensional Hilbert space setting. Simplified to
the finite-dimensional case, the definition is this:
a maximal monotone mapping $M:\reals^n\rrarrows\reals^n$ is {\em demipositive} if
there exists $a\in M^{-1}(0)$ such that, if $v\cdot(x-a)=0$ for some $v\in M(x)$ then $x\in M^{-1}(0)$. 
For a proper, lsc, and convex $f$, $\partial f$ is demipositive. 
For a differentiable function $h(x,y)$ strictly convex in $x$ and strictly concave in $y$, the mapping
$(x,y)\mapsto\left(\nabla_x h(x,y),-\nabla_y h(x,y)\right)$ is demipositive, but
without strictness, this may fail: that same mapping for $h(x,y)=x^2+xy$ is not demipositive. 

Let $M:\reals^n\rrarrows\reals^n$ be a demipositive maximal monotone mapping with $M^{-1}(0)$ 
compact. Let $K\subset\reals^n$ be compact and such that $K\cap M^{-1}(0)=\emptyset$. Then
$$\inf_{a\in M^{-1}(0)} \inf_{x\in K} \inf_{v\in M(x)} v\cdot(x-a)$$
is positive and thus there exists a lsc function $W:\reals^n\to[0,\infty)$ with $W(x)=0$ if and only
if $x\in M^{-1}(0)$ and such that $\frac{d}{dt}\frac{1}{2}\|\sol(t)-a\|^2\leq-W(\sol(t))$
for every solution to $\xdot\in -M(x)$, every $a\in M^{-1}(0)$.  
Theorem \ref{uber theorem} can be then used to prove:

\begin{theorem} 
\label{demi theorem}
Let $Q={1,2,\dots,p}$. 
For each $q\in Q$, let $M_q:\reals^n\rrarrows\reals^n$ be a demipositive maximal monotone mapping.
Suppose that 
$$A:=\bigcap_{q\in Q} M^{-1}(0)\not=\emptyset.$$
Let $\sigma$ be a switching signal that satisfies Assumption \ref{sigma assumption} 
and $\sol:[0,\infty)\to\reals^n$ be a uniformly continuous solution to \eqref{ss}. 
Then $\sol(t)$ converges, as $t\to\infty$ to a point in $A$. 
\end{theorem}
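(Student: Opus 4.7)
The plan is to reduce the claim to Theorem \ref{uber theorem}. Fix $a \in A$ and set $V(x) := \frac{1}{2}\|x - a\|^2$; then $V$ is differentiable and bounded below by $0$. Since $a \in M_q^{-1}(0)$ for every $q \in Q$, monotonicity of each $M_q$ gives $v \cdot (x - a) \geq 0$ for all $v \in M_q(x)$, so along any solution $\sol$ to \eqref{ss} one has $\frac{d}{dt}V(\sol(t)) = (\sol(t) - a)\cdot\soldot(t) \leq 0$ almost everywhere, and $\sol$ remains in the closed ball $K := \{y \in \reals^n \,|\, \|y - a\| \leq \|\sol(0) - a\|\}$. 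The same inequality with $a$ replaced by any $a_q \in M_q^{-1}(0)$ on intervals where $\sigma(t) = q$ verifies hypothesis (c) of Theorem \ref{uber theorem} with the choice $A_q := M_q^{-1}(0)$.

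For each $q \in Q$ I would next construct a lower semicontinuous $W_q : \reals^n \to [0,\infty)$ with $\argmin W_q = M_q^{-1}(0)$, so that $\bigcap_q A_q = A$ is nonempty by hypothesis (b), and with
$$\frac{d}{dt}V(\sol(t)) \leq -W_q(\sol(t)) \quad \mbox{whenever } \sigma(t) = q.$$
The recipe sketched in the discussion preceding the theorem suggests taking (a lsc regularization of)
$$W_q(x) := \inf_{a' \in M_q^{-1}(0) \cap K}\, \inf_{v \in M_q(x)}\, v\cdot(x - a').$$
This $W_q$ is nonnegative by monotonicity, setting $a' = a \in A$ in the definition gives the decay inequality (a), and $W_q$ vanishes on $M_q^{-1}(0)$. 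With $V$, $W_q$, and $A_q$ in place, Theorem \ref{uber theorem}, whose uniform-continuity hypothesis is part of the assumptions of the present theorem, yields $\lim_{t \to \infty} \sol(t) \in A$.

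The main obstacle is the converse inclusion, namely proving $W_q(x) > 0$ for every $x \in K \setminus M_q^{-1}(0)$. Demipositivity furnishes only a single witness $a_q^* \in M_q^{-1}(0)$, whereas the inner $a'$ in the infimum ranges over the whole compact set $M_q^{-1}(0) \cap K$ (compact as the intersection of the closed set $M_q^{-1}(0)$ with the compact set $K$). My plan is a sequential compactness argument: supposing the infimum were zero, extract $x_j \to x^* \in K \setminus M_q^{-1}(0)$, $a'_j \to \bar a \in M_q^{-1}(0) \cap K$, and $v_j \to v^* \in M_q(x^*)$ via local boundedness and the closed graph of $M_q$, obtaining $v^* \cdot (x^* - \bar a) = 0$; a convex perturbation along the segment from $\bar a$ to $a_q^*$ inside $M_q^{-1}(0)$ (which is convex because $M_q$ is maximal monotone), combined with monotonicity, should drive $v^* \cdot (x^* - a_q^*) = 0$ and thereby contradict demipositivity at $a_q^*$. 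Modulo this technical lemma, the proof is a direct application of Theorem \ref{uber theorem}.
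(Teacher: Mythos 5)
Your reduction to Theorem \ref{uber theorem} is exactly the route the paper takes (the paper's own ``proof'' of Theorem \ref{demi theorem} is only the two-sentence sketch preceding its statement), and the pieces you do verify are correct: $V(x)=\frac{1}{2}\|x-a\|^2$, hypothesis (c) via monotonicity against each $a_q\in M_q^{-1}(0)$, boundedness of $\sol$, nonnegativity of $W_q$, and its vanishing on $M_q^{-1}(0)$. The step you defer --- positivity of $W_q$ on compact sets disjoint from $M_q^{-1}(0)$ --- is precisely the step the paper also asserts without proof (and under an extra compactness hypothesis on $M^{-1}(0)$ that the theorem statement does not impose; your restriction of $a'$ to $M_q^{-1}(0)\cap K$ is a sensible way around that). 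So structurally you and the paper agree; the question is whether the deferred lemma can be closed.

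It cannot be closed by the patch you sketch, and the gap is genuine. From $v^*\cdot(x^*-\bar a)=0$ and convexity of $M_q^{-1}(0)$, monotonicity against $z_s=\bar a+s(a_q^*-\bar a)\in M_q^{-1}(0)$ gives $-s\,v^*\cdot(a_q^*-\bar a)\geq 0$, hence only $v^*\cdot(x^*-a_q^*)=v^*\cdot(x^*-\bar a)+v^*\cdot(\bar a-a_q^*)\geq 0$ --- which you already had from monotonicity alone and which is perfectly consistent with $v^*\cdot(x^*-a_q^*)>0$, so no contradiction with demipositivity at the witness $a_q^*$ results. The definition of demipositivity used here certifies $v\cdot(x-a)>0$ off $M_q^{-1}(0)$ for \emph{one particular} zero $a=a_q^*$, and nothing forces that certificate to transfer to another zero $\bar a$; for subdifferentials it does transfer, because $v\cdot(x-\bar a)\geq f(x)-\min f$ for \emph{every} minimizer $\bar a$, but that is special structure. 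The same issue infects hypothesis (a) of Theorem \ref{uber theorem}: the decrease of $V(x)=\frac{1}{2}\|x-a\|^2$ is governed by $v\cdot(x-a)$ for the common zero $a\in A$, which can be strictly smaller than $v\cdot(x-a_q^*)$, so defining $W_q$ through the witness alone does not yield \eqref{V decrease} either. To complete the proof you need either a lemma that every point of $M_q^{-1}(0)$ (or at least every point of $A$) can serve as the demipositivity witness, or a correspondingly strengthened hypothesis; as written, both your argument and the paper's sketch leave this point unestablished.
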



\section{Acknowledgment}                               
The work by R. Goebel was partially supported by the Simons Foundation Grant 315326.
This work by R. Sanfelice was partially supported 
by the National Science Foundation under CAREER Grant no. ECS-1450484, Grant no. ECS-1710621, and Grant no. CNS-1544396, by the Air Force Office of Scientific Research under Grant no. FA9550-16-1-0015, and by the Air Force Research Laboratory under Grant no. FA9453-16-1-0053.


\begin{thebibliography}{10}

\bibitem{OhParkAhn15Automatica}
K.-K. Oh, M.-C. Park, and H.-S. Ahn.
\newblock A survey of multi-agent formation control.
\newblock {\em Automatica J. IFAC}, 53:424--440, 2015.

\bibitem{OlfatiSaberFaxMurray07IEEE}
R.~Olfati-Saber, J.A. Fax, and R.M. Murray.
\newblock Consensus and cooperation in networked multi-agent systems.
\newblock {\em Proceedings of the IEEE}, 95(1):215--233, 2007.

\bibitem{ShiJohanssonHong13TAC}
G.~Shi, K.H. Johansson, and Y.~Hong.
\newblock Reaching an optimal consensus: dynamical systems that compute
  intersections of convex sets.
\newblock {\em IEEE Trans. Automat. Control}, 58(3):610--622, 2013.

\bibitem{QiuLiuXie16Automatica}
Z.~Qiu, S.~Liu, and L.~Xie.
\newblock Distributed constrained optimal consensus of multi-agent systems.
\newblock {\em Automatica J. IFAC}, 68:209--215, 2016.

\bibitem{ShiProutiereJohansson15SICON}
G.~Shi, A.~Proutiere, and K.H. Johansson.
\newblock Network synchronization with convexity.
\newblock {\em SIAM J. Control Optim.}, 53(6):3562--3583, 2015.

\bibitem{YangMengShiHongJohansson16TAC}
T.~Yang, Z.~Meng, G.~Shi, Y.~Hong, and K.H. Johansson.
\newblock Network synchronization with nonlinear dynamics and switching
  interactions.
\newblock {\em IEEE Trans. Automat. Control}, 61(10):3103--3108, 2016.

\bibitem{Brezis}
H.~Br{\'e}zis.
\newblock {\em Op\'erateurs maximaux monotones et semi-groupes de contractions
  dans les espaces de {H}ilbert}.
\newblock North-Holland Publishing Co., Amsterdam-London; American Elsevier
  Publishing Co., Inc., New York, 1973.

\bibitem{PeypouquetSorin20JCA}
J.~Peypouquet and S.~Sorin.
\newblock Evolution equations for maximal monotone operators: asymptotic
  analysis in continuous and discrete time.
\newblock {\em J. Convex Anal.}, 17(3-4):1113--1163, 2010.

\bibitem{AttouchCabotCzarnecki18TAMS}
H.~Attouch, A.~Cabot, and M.-O. Czarnecki.
\newblock Asymptotic behavior of nonautonomous monotone and subgradient
  evolution equations.
\newblock {\em Trans. Amer. Math. Soc.}, 370(2):755--790, 2018.

\bibitem{AttouchGarrigosGoudou15JMAA}
H.~Attouch, G.~Garrigos, and X.~Goudou.
\newblock A dynamic gradient approach to {P}areto optimization with nonsmooth
  convex objective functions.
\newblock {\em J. Math. Anal. Appl.}, 422(1):741--771, 2015.

\bibitem{Miglierina04SVA}
E.~Miglierina.
\newblock Slow solutions of a differential inclusion and vector optimization.
\newblock {\em Set-Valued Anal.}, 12(3):345--356, 2004.

\bibitem{FliegeSvaiter00MMOR}
J.~Fliege and B.F. Svaiter.
\newblock Steepest descent methods for multicriteria optimization.
\newblock {\em Math. Methods Oper. Res.}, 51(3):479--494, 2000.

\bibitem{ZengYiHong17TAC}
X.~Zeng, P.~Yi, and Y.~Hong.
\newblock Distributed continuous-time algorithm for constrained convex
  optimizations via nonsmooth analysis approach.
\newblock {\em IEEE Trans. Automat. Control}, 62(10):5227--5233, 2017.

\bibitem{NedicOzdaglarParrilo10TAC}
A.~Nedi\'c, A.~Ozdaglar, and P.A. Parrilo.
\newblock Constrained consensus and optimization in multi-agent networks.
\newblock {\em IEEE Trans. Automat. Control}, 55(4):922--938, 2010.

\bibitem{ZhuMartinez12TAC}
M.~Zhu and S.~Mart\'\i nez.
\newblock On distributed convex optimization under inequality and equality
  constraints.
\newblock {\em IEEE Trans. Automat. Control}, 57(1):151--164, 2012.

\bibitem{GoebelSanfelice18CDC}
R.~Goebel and R.G. Sanfelice.
\newblock Applications of convex analysis to consensus algorithms, pointwise
  asymptotic stability, and its robustness.
\newblock In {\em Proc. 57th IEEE Conference on Decision and Control}, 2018.
\newblock submitted, available at
  https://hybrid.soe.ucsc.edu/files/preprints/GS-CDC18.pdf, password
  PASgradient.

\bibitem{ManfrediAngeli17Automatica}
S.~Manfredi and D.~Angeli.
\newblock Necessary and sufficient conditions for consensus in nonlinear
  monotone networks with unilateral interactions.
\newblock {\em Automatica J. IFAC}, 77:51--60, 2017.

\bibitem{GoebelSanfelice18SICON}
R.~Goebel and R.G. Sanfelice.
\newblock Pointwise asymptotic stability in a hybrid system and well-posed
  behavior beyond {Z}eno.
\newblock {\em SIAM J. Control Optim.}, 56(2):1358--1385, 2018.

\bibitem{JafarianDePersis13ACC}
M.~Jafarian and C.~De Persis.
\newblock Exact formation control with very coarse information.
\newblock In {\em Proc. 2013 American Control Conference}, 2013.

\bibitem{HespanhaLiberzonAngeliSontag05TAC}
J.P. Hespanha, D.~Liberzon, D.~Angeli, and E.D. Sontag.
\newblock Nonlinear norm-observability notions and stability of switched
  systems.
\newblock {\em IEEE Trans. Automat. Contr.}, 50(2):154--168, 2005.

\bibitem{DanieL}
D.~Liberzon, 2018.
\newblock private communication.

\bibitem{VA}
R.T. Rockafellar and R.~J-B Wets.
\newblock {\em Variational Analysis}.
\newblock Springer, 1998.

\bibitem{Brezis71}
H.~Br\'ezis.
\newblock Monotonicity methods in {H}ilbert spaces and some applications to
  nonlinear partial differential equations.
\newblock pages 101--156, 1971.

\bibitem{AubinCellina}
J.-P. Aubin and A.~Cellina.
\newblock {\em Differential Inclusions}.
\newblock Springer-Verlag, 1984.

\bibitem{Henry73JMAA}
C.~Henry.
\newblock An existence theorem for a class of differential equations with
  multivalued right-hand side.
\newblock {\em J. Math. Anal. Appl.}, 41:179--186, 1973.

\bibitem{BrogliatoDaniilidisLemarechalAcary06SCL}
B.~Brogliato, A.~Daniilidis, C.~Lemar{\'e}chal, and V.~Acary.
\newblock On the equivalence between complementarity systems, projected systems
  and differential inclusions.
\newblock {\em Systems Control Lett.}, 55(1):45--51, 2006.

\bibitem{Bruck75JFA}
R.E.~Bruck Jr.
\newblock Asymptotic convergence of nonlinear contraction semigroups in
  {H}ilbert space.
\newblock {\em J. Funct. Anal.}, 18:15--26, 1975.

\end{thebibliography}
\end{document}